\newtheorem{theorem}{Theorem}
\newtheorem{lemma}{Lemma}
\newcommand{\cqfd}{\mbox{}\nolinebreak\hfill\rule{2mm}{2mm}\medskip\par}
\newenvironment{proof}[1] {\par\noindent{\bf Proof. }{#1}}{\cqfd}
\title{\textbf{On the stability of exact ABCs for the reaction-subdiffusion equation on unbounded domain}\footnote{This research was partially supported by the
National Natural Science Foundation of China under Grant  Nos. 11271173 and 11426174,
 the Research Foundation from the Xi'an university of
Technology under Grant No. 2014CX022, and the Natural Science Basic Research Plan in
Shaanxi Province of China under Grant No.2015JQ1022.}}
\author{
Can Li$^{1}$\footnote{Corresponding author. \,Email addresses:  mathlican@xaut.edu.cn(C.Li)}\\
\\
\small \it    $^1$Department of Applied Mathematics, School of Sciences,\\
\small \it  Xi'an University of Technology, Xi'an, Shaanxi 710054, P.R. China.}
\date{}
\begin{document}
\maketitle
\begin{abstract}
In this note we propose the exact artificial boundary conditions formula to the fractional
reaction-subdiffusion equation on an unbounded domain. With the application of
Laplace transformation, the exact
artificial boundary conditions (ABCs) are derived to
reformulate the original problem on the unbounded domain to an
initial-boundary-value problem on the bounded computational domain. By the  Kreiss theory,
we prove that the reduced initial-boundary value problem is stability.
Based on the properties of tempered fractional calculus, we obtain that
the reduced initial-boundary value problem is long-time stability.
\end{abstract}

{\it Keywords:}~
Fractional
reaction-subdiffusion equation, Exact artificial boundary conditions,
Stability, Tempered fractional calculus, Stability.

\section{Introduction} \label{sec:intro}
In the past two decades, the continuous time random walks (CTRWs) model are popular to describe the  movement of anomalous diffusion particles on the mesoscopic level \cite{Podlubny:99,Metzler:00}. For the sub-diffusion particles, with a long tailed waiting time density and with a reduction in particle concentration deriven by constant per capita linear reaction dynamics,
this process can by described by the following reaction-sub-diffusion equation \cite{Sokolov:06,Henry:06,Henry:08}
\begin{equation}\label{orgproblem}
\frac{\partial u(x,t)}{\partial t}=\kappa_\gamma ~_{0}D_t^{1-\gamma,\lambda}\big(u_{xx}\big)-\lambda u(x,t),x\in \mathbb{R}, \; t>0,
\end{equation}
where $\kappa_\gamma$ is the positive diffusion coefficient, $\lambda$ is a constant reaction rate and  the operator $_{0}D_t^{1-\gamma,\lambda}$ denotes the Riemann-Liouville tempered  fractional derivative \cite{Henry:06,Henry:08,Li:2015}
\begin{equation}\label{TRLD}
_{0}D_t^{\alpha,\lambda}u(t)
=e^{-\lambda t}{{_{0}}D_t^{\alpha}}\big( e^{\lambda t}u(t)\big)=\frac{e^{-\lambda t}}{\Gamma(1-\alpha)}\frac{d}{d
t}\int_{0}^t\frac{e^{\lambda s}u(s)}{(t-s)^{\gamma}}ds,~0<\alpha<1.
\end{equation}

Compare with the previous fractional diffusion models, model \eqref{orgproblem} behaviors many good properties,
such as it preserves the positivity of
solution and  recovers the reaction kinetic equation for homogeneous concentrations, for more physical meaning about this model we refer the references \cite{Sokolov:06,Henry:06,Henry:08}. The model \eqref{orgproblem} is derived in infinite domain.
To get the exact solution of model \eqref{orgproblem}, we usual need the  assumption on boundary
\begin{equation}\label{orgboundary}
u \rightarrow 0,\ |x|\rightarrow \infty.
\end{equation} However, to numerically solve this model, we need impose proper boundaries on the finite computing domain.
One of
powerful tools to overcome the difficulty of unboundedness of the domain  is  the artificial boundary methods (ABMs)\cite{Han:13}.  The main idea of ABMs is to design suitable boundary conditions to absorb the waves arriving at artificial boundaries. And it has been successfully used to solve many kinds of linear and nonlinear partial differential equations with classical derivatives, see the review papers \cite{Antoine:08,Han:13a}.  So far, few work is reported for the fractional diffusion equation on unbounded domain due to the non-locality of fractional operators. In the literature, the exact ABCs are constructed for the one-dimensional time fractional diffusion equation in \cite{Gao12,Gao13}, and the convergence rate and stability of the finite difference methods are established. Brunner et al \cite{Brunner:14} obtained a series of artificial boundary conditions (ABCs) for two-dimensional time fractional diffusion-wave equation. Ghaffari and Hossein \cite{Ghaffari:13} derive the exact ABCs for sub-diffusion equation by using circle artificial boundaries.
Awotunde et al. \cite{Awotunde:15} obtained ABCs for a modified fraction diffusion problem.
Using the similar approach given by Engquist and Majda, Dea obtains a ABCs for the two-dimensional time-fractional wave equation in his recent work \cite{Dea:13}. It is important that the boundaries are designed such that the reduced problem is well-posed. In this note, we focus on the stability of model \eqref{orgproblem} with proper artificial boundary. By the Laplace transformation, we first propose the exact
artificial boundary conditions (ABCs) for model \eqref{orgproblem}. By the  Kreiss theory,
we prove that the reduced initial-boundary value problem is stability.
Base on the properties of tempered fractional calculus, we obtain that
the reduced initial-boundary value problem is long-time stability.

The rest of the note is organized as follows. In Section \ref{eABCsofFDE}, we derive of the exact ABCs for model \eqref{orgproblem}. The original problem  on bounded domain with the boundary condition \eqref{orgboundary} is
 reduced an initial-boundary-value problem on a bounded computational domain.
 In Section \ref{exactstability},
we investigate the stability of the reduced initial-boundary value problem.
\section{The exact artificial boundary conditions}\label{eABCsofFDE}
We first derive the exact artificial boundary conditions for the fractional reaction-subdiffusion equation \eqref{orgproblem} with the boundary condition \eqref{orgboundary}. If we introduce two artificial boundaries
$x_l$ and $x_r,$
Then the real line $\textbf{R}$ is divided  into three parts:
$
\Omega_l :=  \{x|-\infty<x<x_l\}, \quad \Omega_i :=  \{x| x_l<x<x_r\}, \quad \Omega_r :=  \{x| x_r<x<\infty \}.
$
The constants $x_l,\;x_r$ are chosen such that $\sup \{u_0(x)\} \subset \Omega_i$. Denote the exterior domain by $\Omega_e = \Omega_l\cup \Omega_r$. Restricting the solution $u(x,t)$ to $\Omega_e$, we have
\begin{align}
&\frac{\partial u(x,t)}{\partial t}=\kappa_\gamma ~_{0}D_t^{1-\gamma,\lambda}\big(u_{xx}\big)-\lambda u(x,t), & x\in \Omega_{e},t>0,    \label{problemdiffa1}\\
&u(x,0)=0,  & x\in \Omega_{e}, \label{problemdiffa2}\\
& u(x,t)|_{x=x_{l}}= u(x_l,t)\\
&u(x,t)|_{x= x_{r}} = u(x_r,t),     \label{problemdiffa3}\\
&u \rightarrow0,  & \text{as} \ |x|\rightarrow \infty.\label{problemdiffa4}
\end{align}
To derive artificial boundary conditions, we introduce the Laplace transform of a function $f(t)$ by
\begin{equation}\label{deflaplce}
\mathcal {L}\{f(t) ;s\}=\widehat{f}(s)
=\int^{+\infty}_{0}e^{-st}f(t)dt,\quad \text{Re}(s)>0,
\end{equation}
and the inverse Laplace transform of a function $g(s)$ gives

\begin{equation}\label{indeflaplce}
\mathcal {L}^{-1}\{g(s) ;t\}=g(t)
=\frac{1}{2\pi i}\int^{+i\infty}_{-i\infty}e^{st}g(s)ds,\quad i^2=-1.
\end{equation}
 \begin{lemma}
 For $0<\alpha<1$,
the Laplace transform of the  Riemann-Liouville tempered fractional derivative
\cite{Podlubny:99,Henry:06,Henry:08}
\begin{equation}\label{TRLDe}
_{0}D_t^{\alpha,\lambda}f(t)
=\frac{e^{-\lambda t}}{\Gamma(1-\alpha)}\frac{d}{d
t}\int_{0}^t\frac{e^{\lambda s}f(s)}{(t-s)^{\alpha}}ds,
\end{equation}
is given by
  \begin{equation}\label{gLpa}
\mathcal {L}\{ {_{0}D}_t^{\alpha,\lambda}f(t);s\}=(s+\lambda)^{\alpha}\widehat{f}(s).
\end{equation}
And the Laplace transform of the  Caputo tempered fractional derivative \cite{Samko:93}
\begin{equation}\label{TCDe}
      {_{0}^{C}D}_t^{\alpha,\lambda}f(t)=\frac{e^{-\lambda t}}{\Gamma(1-\alpha)}\int_{0}^t\frac{1}{(t-s)^{\alpha}}\frac{d (e^{\lambda s}f(s))}{d
          s}ds,
    \end{equation}
is given by
\begin{equation}\label{caputLpa}
\mathcal {L}\{ {_{0}^{C}D}_t^{\alpha,\lambda}f(t);s\}=(s+\lambda)^{\alpha}\widehat{f}(s)-(s+\lambda)^{\alpha-1}
f(0).
\end{equation}
\end{lemma}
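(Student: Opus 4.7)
The plan is to reduce the tempered identities to the classical (untempered) ones. The key observation is the conjugation relation
\[
{_{0}D}_t^{\alpha,\lambda}f(t)=e^{-\lambda t}\,{_{0}D}_t^{\alpha}\bigl(e^{\lambda t}f(t)\bigr),
\]
already recorded in \eqref{TRLD}, together with its direct analogue ${_{0}^{C}D}_t^{\alpha,\lambda}f(t)=e^{-\lambda t}\,{_{0}^{C}D}_t^{\alpha}(e^{\lambda t}f(t))$, which follows at once from the definition \eqref{TCDe}. Combined with the frequency-shift rule $\mathcal{L}\{e^{-\lambda t}h(t);s\}=\widehat{h}(s+\lambda)$, this turns the computation into a standard Laplace transform of an untempered fractional derivative evaluated at the shifted argument $p=s+\lambda$, applied to $g(t):=e^{\lambda t}f(t)$, for which $\widehat{g}(p)=\widehat{f}(p-\lambda)$ and $g(0)=f(0)$.

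For the untempered side I would use the convolution representation. Write the Riemann--Liouville operator as ${_{0}D}_t^{\alpha}g(t)=\frac{d}{dt}(K_\alpha * g)(t)$ with kernel $K_\alpha(t)=t^{-\alpha}/\Gamma(1-\alpha)$. Since $\widehat{K_\alpha}(p)=p^{\alpha-1}$, the convolution theorem combined with the differentiation rule $\mathcal{L}\{h';p\}=p\widehat{h}(p)-h(0)$ gives $\mathcal{L}\{{_{0}D}_t^{\alpha}g;p\}=p^{\alpha}\widehat{g}(p)$, provided the boundary contribution $(K_\alpha * g)(0)$ vanishes; this is immediate whenever $g$ is bounded near $0$. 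For the Caputo operator, the same recipe applied in the reverse order (first differentiate $g$, then convolve with $K_\alpha$) together with $\mathcal{L}\{g';p\}=p\widehat{g}(p)-g(0)$ yields $\mathcal{L}\{{_{0}^{C}D}_t^{\alpha}g;p\}=p^{\alpha}\widehat{g}(p)-p^{\alpha-1}g(0)$.

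Combining the pieces, the shift rule produces $\mathcal{L}\{{_{0}D}_t^{\alpha,\lambda}f;s\}=\mathcal{L}\{{_{0}D}_t^{\alpha}g;s+\lambda\}=(s+\lambda)^{\alpha}\widehat{f}(s)$, and likewise $\mathcal{L}\{{_{0}^{C}D}_t^{\alpha,\lambda}f;s\}=(s+\lambda)^{\alpha}\widehat{f}(s)-(s+\lambda)^{\alpha-1}f(0)$, which are exactly \eqref{gLpa} and \eqref{caputLpa}. There is no substantive obstacle in this argument; the only mildly delicate points are verifying $(K_\alpha * g)(0)=0$ and justifying the exchange of differentiation with the Laplace integral, both of which hold under the standard regularity and exponential-order hypotheses on $f$ that make the defining Laplace integrals convergent for $\mathrm{Re}(s)+\lambda$ sufficiently large.
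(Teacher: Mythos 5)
Your proposal is correct and follows essentially the same route as the paper: both reduce the tempered operators to the untempered ones via the conjugation $e^{-\lambda t}\,{_{0}D}_t^{\alpha}(e^{\lambda t}\cdot)$ together with the frequency-shift rule, and both hinge on showing that the same boundary term (your $(K_\alpha * g)(0)$, the paper's ${_{0}D}_t^{\alpha-1}(e^{\lambda t}f(t))|_{t=0}$) vanishes for $f$ bounded near $0$. The only difference is that you derive the shifted untempered formulas from the convolution theorem, whereas the paper imports them from the reference [Li:2015] and only verifies the vanishing of the boundary term explicitly.
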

\begin{proof}
The Laplace transform of the  Caputo tempered fractional derivative is directly form \cite{Li:2015}.
For the Laplace transform of the  Riemann-Liouville tempered fractional derivative,
from reference \cite{Li:2015}, we have
  \begin{equation}\label{gLpaa}
\mathcal {L}\{ {_{0}D}_t^{\alpha,\lambda}f(t);s\}=(s+\lambda)^{\alpha}\widehat{f}(s)
-(s+\lambda)\bigg[{_{0}D}_t^{\alpha-1}(e^{\lambda t}f(t))\big|_{t=0}\bigg].
\end{equation}
where $_{0}D_t^{\alpha-1,\lambda}$ denotes  the Riemann-Liouville integral operator  given as
\begin{equation}\label{integralL}
_{0}D_t^{\alpha-1,\lambda}f(t)
=\frac{1}{\Gamma(1-\alpha)}\int_{0}^te^{-\lambda( t-s)}(t-s)^{-\alpha}f(s)ds.
\end{equation}
Furthermore, using the fact
\begin{align}\label{imprelation}
&\lim_{t\rightarrow
0}\frac{1}{\Gamma(1-\alpha)}\int_{0}^te^{-\lambda( t-s)}(t-s)^{-\alpha}f(s)ds\nonumber
\\
&=\lim_{t\rightarrow
0}\frac{e^{-\lambda t}}{\Gamma(1-\alpha)}\int_{0}^t(t-s)^{-\alpha}f(s)e^{\lambda s}ds\nonumber
\\
&=\lim_{t\rightarrow
0}\frac{1}{\Gamma(\alpha)}\bigg[t^{1-\alpha}e^{-\lambda t}
f(0)+\int_{0}^t(t-s)^{1-\alpha}e^{-\lambda (t-s)}[f'(s)+\lambda f(s)]ds\bigg]\\
&=0.\nonumber
\end{align}
 and the assumption on the
smoothness of $f(t)$, we get \eqref{gLpa}.
\end{proof}
Using relation \eqref{gLpa}, and notice that the initial value \eqref{problemdiffa2},
we obtain
\begin{align}
(s+\lambda)^{\gamma}\widehat{u}(x,s)&=\kappa_\gamma \widehat{u}_{xx}(x,s),  \quad x\in\Omega_e,~\text{Re}(s)>0    \label{lpa}\\
\widehat{u} &\rightarrow0,  \quad  \text{as} \ |x|\rightarrow \infty.\label{lpb}
\end{align}
Using the boundary conditions \eqref{lpb}, the solutions of differential equation \eqref{lpa} gives
\begin{align}
\widehat{u}(x,s)&=c_1(s)e^{-\frac{(s+\lambda)^{\gamma/2}}{\sqrt{\kappa_\gamma}} x} ,  \quad x\in \Omega_r, \label{gensoluaa} \\
\widehat{u}(x,s)&=c_2(s)e^{\frac{(s+\lambda)^{\gamma/2}}{\sqrt{\kappa_\gamma}} x},   ~~\quad x\in \Omega_l.\label{gensoluba}
\end{align}
Differentiating equations \eqref{gensoluaa} and  \eqref{gensoluba} with respect to variable $x$, we have
\begin{align}
\widehat{u}_x(x,s)&=-\frac{(s+\lambda)^{\gamma/2}}{\sqrt{\kappa_\gamma}} \widehat{u}(x,s),  \quad x\in\Omega_r,  \label{dgensolua}  \\
\widehat{u}_x(x,s)&=\frac{(s+\lambda)^{\gamma/2}}{\sqrt{\kappa_\gamma}} \widehat{u}(x,s),  ~~\quad x\in \Omega_l.\label{dgensolub}
\end{align}
Remembering the  Laplace transform formula \eqref{caputLpa}
 and taking the inverse Laplace transform to \eqref{dgensolua}-\eqref{dgensolub} , give as
\begin{align}
u_{x}(x,t)&=-\frac{1}{\sqrt{\kappa_\gamma}}\frac{e^{-\lambda\gamma}}{\Gamma(1-\frac{\gamma}{2})}\int_{0}^{t}\frac{e^{\lambda\gamma}u_\tau (x,\tau) }{(t-\tau)^{\frac{\gamma}{2}}}d \tau  , \quad x=x_r,    \label{inversedgensolua}\\
u_{x}(x,t)&=\frac{1}{\sqrt{\kappa_\gamma}}\frac{e^{-\lambda\gamma}}{\Gamma(1-\frac{\gamma}{2})}\int_{0}^{t}\frac{e^{\lambda\gamma}u_\tau (x,\tau) }{(t-\tau)^{\frac{\gamma}{2}}}d \tau,  \quad x=x_l.\label{inversedgensolub}
\end{align}
which are equivalent to
\begin{align}
u_{x}(x,t)&= -\frac{1}{\sqrt{\kappa_\gamma}}~{_{0}^{C}\!D}_{t}^{\frac{\gamma}{2},\lambda}u(x,t),  \quad x=x_r,    \label{inversedgensoluav}\\
u_{x}(x,t)&= \frac{1}{\sqrt{\kappa_\gamma}}~{_{0}^{C}\!D}_{t}^{\frac{\gamma}{2},\lambda}u(x,t),  ~~\quad x=x_l.\label{inversedgensolubv}
\end{align}
where we used the Laplace transform of the Caputo fractional derivative \eqref{caputLpa} and  the initial value condition \eqref{problemdiffa2}.
If take \eqref{inversedgensolua}-\eqref{inversedgensolub} as artificial boundary conditions, then the previous problem
\eqref{problemdiffa1}-\eqref{problemdiffa4} on unbounded domain is reduced to the following initial-boundary value problem on bounded domain
\begin{align}
&\frac{\partial u(x,t)}{\partial t}=\kappa_\gamma ~_{0}D_t^{1-\gamma,\lambda}\big(u_{xx}\big)-\lambda u(x,t),   &x\in \Omega_{in}, \;t>0,   \label{exacta1}\\
&u(x,0)=u_0(x), & x\in\Omega_i ,\label{exacta2}\\
& u_x(x,t)=\frac{1}{\sqrt{\kappa_\gamma}}~{_{0}^{C}D}_{t}^{\frac{\gamma}{2},\lambda}u(x,t), & x=x_l,\\
& u_x(x,t)=-\frac{1}{\sqrt{\kappa_\gamma}}~{_{0}^{C}D}_{t}^{\frac{\gamma}{2},\lambda}u(x,t), &  x=x_r.\label{exacta3}
\end{align}
\section{Stability analysis of the reduced problems} \label{exactstability}
Next we consider the stability analysis of the reduced problem \eqref{exacta1}-\eqref{exacta3}.
The main tool is the Kreoss theory \cite{KL89}. Firstly, we introduce the notations of the inner product, the classic $L^2$ norm  given by
\begin{align*}
 (u,v)=\int_{\Omega_i}u(x)v(x)dx, \quad \|u\|^2_{L^2(\Omega_{i})}=\|u\|^2=(u,u).
\end{align*}

\begin{lemma}[\cite{Yan:92}]\label{lema2p}(Parseval relation)Suppose that $u(t),v(t)$ is well defined on
$[0,+\infty)$, then for $s_0>0$ we have
\begin{equation}\label{prelationp}
\int_{-\infty}^{+\infty}\hat{u}(s_0+i\zeta)\hat{v}(s_0-i\zeta)d\zeta=
2\pi\int_{0}^{+\infty}e^{-2s_0t}u(t)v(t)dt,i^2=-1.
\end{equation}
\end{lemma}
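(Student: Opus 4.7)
The plan is to reduce this identity to the classical Plancherel/Parseval theorem for the Fourier transform on $\mathbb{R}$. First I would introduce the auxiliary functions
\begin{equation*}
g(t)=\begin{cases} e^{-s_0 t}u(t), & t\ge 0,\\ 0, & t<0, \end{cases}
\qquad
h(t)=\begin{cases} e^{-s_0 t}v(t), & t\ge 0,\\ 0, & t<0, \end{cases}
\end{equation*}
and observe that the Laplace transform values appearing in the lemma are just Fourier transforms of these truncated exponentially weighted functions. Concretely,
\begin{equation*}
\widehat{u}(s_0+i\zeta)=\int_{0}^{+\infty}e^{-(s_0+i\zeta)t}u(t)\,dt=\int_{-\infty}^{+\infty}e^{-i\zeta t}g(t)\,dt=\mathcal{F}[g](\zeta),
\end{equation*}
and analogously $\widehat{v}(s_0-i\zeta)=\mathcal{F}[h](-\zeta)$. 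Under the standing assumption that $u,v$ are well defined on $[0,+\infty)$ and the Laplace transforms exist for $\mathrm{Re}(s)\ge s_0$, $g$ and $h$ belong to $L^{1}(\mathbb{R})\cap L^{2}(\mathbb{R})$, so these Fourier transforms are classical $L^2$ objects.

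Once this identification is made, the lemma follows from the standard Plancherel pairing
\begin{equation*}
\int_{-\infty}^{+\infty}\mathcal{F}[g](\zeta)\,\mathcal{F}[h](-\zeta)\,d\zeta = 2\pi\int_{-\infty}^{+\infty}g(t)h(t)\,dt,
\end{equation*}
which I would derive quickly by writing out the double integral, applying Fubini to interchange the $\zeta$ integral with the two time integrals, and using $\int_{-\infty}^{+\infty}e^{i\zeta(\tau-t)}d\zeta=2\pi\delta(\tau-t)$. Substituting the definitions of $g$ and $h$ collapses the right-hand side to $2\pi\int_{0}^{+\infty}e^{-2s_0 t}u(t)v(t)\,dt$, which is exactly \eqref{prelationp}.

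The only nontrivial step is justifying the interchange in the $\zeta$-integral; the absolute convergence afforded by the damping factor $e^{-s_0 t}$ (with $s_0>0$) makes Fubini applicable once $g,h$ are assumed to lie in $L^1\cap L^2$. If one wishes to avoid a distributional argument, the cleaner route is to invoke Plancherel directly as a theorem about the $L^2$ Fourier transform; the main care needed is verifying $L^2$ membership of the weighted functions, which will be the principal (and essentially only) obstacle in a rigorous write-up. Since the result is quoted from \cite{Yan:92}, I would expect a one-paragraph proof along these lines to suffice.
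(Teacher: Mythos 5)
The paper states this lemma without proof, merely citing \cite{Yan:92}, so there is nothing to compare against; your reduction to the classical Plancherel/Parseval identity via the truncated, exponentially damped functions $g(t)=e^{-s_0t}u(t)\mathbf{1}_{t\ge0}$ and $h(t)=e^{-s_0t}v(t)\mathbf{1}_{t\ge0}$ is the standard and correct argument. The only caveat is the one you already flag yourself: the hypothesis that $u,v$ are ``well defined'' must be interpreted as guaranteeing $g,h\in L^{1}(\mathbb{R})\cap L^{2}(\mathbb{R})$ (e.g.\ $u,v$ locally integrable of exponential order less than $s_0$), so that the Fourier transforms exist and the Fubini/Plancherel step is legitimate.
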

\begin{lemma}[\cite{Yan:92}]\label{lema2in}Suppose that $v(t)$ is well defined on
$[0,\infty)$, then for $s_0>0$ we have
\begin{equation}\label{2.17}
\int_{0}^{+\infty}e^{-2s_0 t}\frac{d v(t)}{dt}v(t)dt\leq0.
\end{equation}then
\begin{equation}\label{2.18}
\int_{0}^{+\infty}e^{-2s_0 t}v^2(t)dt\leq \frac{1}{2s_0}v^2(0).
\end{equation}
\end{lemma}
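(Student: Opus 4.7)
The plan is to read the two displayed inequalities as a hypothesis and a conclusion: assume (2.17) holds, and derive the energy-type bound (2.18). The key identity is the elementary observation $\frac{d}{dt}v^2(t) = 2 v'(t)v(t)$, which rewrites the integrand of (2.17) as a total derivative up to the factor $e^{-2s_0 t}$. I would then apply integration by parts to the integral $\int_0^{+\infty} e^{-2s_0 t}\,\frac{d}{dt}\bigl[v^2(t)\bigr]\,dt$, differentiating the exponential and pushing the derivative off $v^2$.

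In detail, the first step is to write
\begin{equation*}
\int_{0}^{+\infty}e^{-2s_0 t}\,v'(t)\,v(t)\,dt
=\tfrac12\int_{0}^{+\infty}e^{-2s_0 t}\,\frac{d}{dt}\bigl[v^2(t)\bigr]\,dt.
\end{equation*}
A single integration by parts gives
\begin{equation*}
\tfrac12\bigl[e^{-2s_0 t}v^2(t)\bigr]_0^{+\infty}
+s_0\int_{0}^{+\infty}e^{-2s_0 t}v^2(t)\,dt.
\end{equation*}
Assuming the growth of $v$ is mild enough that $e^{-2s_0 t}v^2(t)\to 0$ as $t\to\infty$ (which is implicit in the assertion that the integrals under consideration are well defined for every $s_0>0$), the boundary term reduces to $-\tfrac12 v^2(0)$. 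Substituting this into (2.17) and rearranging yields exactly (2.18).

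The only real obstacle is the justification of the vanishing of the boundary term at $+\infty$, i.e., the decay assumption $e^{-2s_0 t}v^2(t)\to 0$. This is standard in the Laplace/Kreiss setting because we apply the lemma to functions for which $\int_0^{+\infty}e^{-2s_0 t}v^2(t)\,dt<\infty$ is already part of the working hypothesis, and in that case a routine argument (e.g.\ using absolute continuity of $v^2$ and monotone convergence applied to the boundary terms) rules out persistent oscillations of $e^{-2s_0 t}v^2(t)$. Once this decay is in hand the rest of the proof is a one-line integration-by-parts computation; no further estimates are needed.
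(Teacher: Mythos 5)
Your argument is correct: rewriting $v'v$ as $\tfrac12\frac{d}{dt}[v^2]$, integrating by parts against $e^{-2s_0t}$, and using the vanishing of the boundary term at infinity (legitimate here, since the lemma is only ever applied to functions whose Laplace transforms exist for all $\mathrm{Re}(s)>0$) yields exactly \eqref{2.18} from \eqref{2.17}. The paper itself gives no proof of this lemma, only the citation to \cite{Yan:92}; your integration-by-parts computation is the standard argument behind that cited result, so there is nothing further to reconcile.
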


\begin{theorem}\label{stabilityLABC}
The initial-boundary value  problem \eqref{exacta1}-\eqref{exacta3} holds the prior estimate
\begin{align} \label{IQ1}
 \|u\|_{L^2(H^{2,s})}^2\leq \frac{1}{2s_0}\|u_0(x)\|^2,
\end{align}
where $\|u\|_{L^2(H^{2,s})}=\int_{0}^{+\infty}\|u(\cdot,t)\|^2e^{-2s_0t}dt$.
\end{theorem}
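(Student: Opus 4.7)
I will prove Theorem~\ref{stabilityLABC} by combining a standard energy identity with the time-weighted inequality of Lemma~\ref{lema2in} -- which reduces the target bound (\ref{IQ1}) to
\[
\int_0^{+\infty} e^{-2s_0 t}\, \frac{d}{dt}\|u(\cdot,t)\|^2\, dt \leq 0,
\]
together with a Laplace--Parseval argument (Lemma~\ref{lema2p}) that supplies the correct sign for the resulting tempered fractional terms.

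First, I would multiply the reduced equation (\ref{exacta1}) by $u(x,t)$, integrate over $\Omega_i$, and apply spatial integration by parts, obtaining
\[
\frac{1}{2}\frac{d}{dt}\|u\|^2 + \lambda\|u\|^2 = \kappa_\gamma\bigl[u\cdot {_{0}D}_t^{1-\gamma,\lambda}(u_x)\bigr]_{x_l}^{x_r} - \kappa_\gamma\int_{\Omega_i} u_x\cdot {_{0}D}_t^{1-\gamma,\lambda}(u_x)\,dx.
\]
Because $\mathrm{supp}(u_0)\subset\Omega_i$ forces $u_0(x_l)=u_0(x_r)=0$, the Caputo ABCs (\ref{inversedgensoluav})-(\ref{inversedgensolubv}) agree with their Riemann--Liouville analogues at the endpoints, and composing with ${_{0}D}_t^{1-\gamma,\lambda}$ (verified on the Laplace side via (\ref{gLpa}) by the symbol product $(s+\lambda)^{1-\gamma}(s+\lambda)^{\gamma/2}=(s+\lambda)^{1-\gamma/2}$) rewrites the boundary flux as
\[
-\frac{1}{\sqrt{\kappa_\gamma}}\bigl[u(x_r,t)\cdot {_{0}D}_t^{1-\gamma/2,\lambda}u(x_r,t) + u(x_l,t)\cdot {_{0}D}_t^{1-\gamma/2,\lambda}u(x_l,t)\bigr].
\]

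Next, I would multiply the whole identity by $2e^{-2s_0 t}$, integrate in $t\in[0,+\infty)$, and invoke the Parseval identity (Lemma~\ref{lema2p}) together with (\ref{gLpa}) to recast each tempered term on the Bromwich line $\text{Re}\,s=s_0$. For any sufficiently regular real $f$ with $f(0)=0$ and any $0<\alpha<1$, this produces
\[
\int_0^{+\infty} e^{-2s_0 t} f(t)\,{_{0}D}_t^{\alpha,\lambda}f(t)\,dt = \frac{1}{2\pi}\int_{-\infty}^{+\infty} |\widehat{f}(s_0+i\zeta)|^2\,\overline{(s_0+\lambda+i\zeta)^{\alpha}}\,d\zeta,
\]
whose value (being real on the left) equals $\frac{1}{2\pi}\int |\widehat{f}|^2\,\text{Re}\bigl((s_0+\lambda+i\zeta)^{\alpha}\bigr)\,d\zeta \geq 0$, because $\arg(s_0+\lambda+i\zeta)\in(-\pi/2,\pi/2)$ forces $\arg((\,\cdot\,)^{\alpha})\in(-\alpha\pi/2,\alpha\pi/2)$ and hence the real part of the fractional power is strictly positive. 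Applying this with $\alpha=1-\gamma$ after integrating in $x$, and with $\alpha=1-\gamma/2$ pointwise at $x=x_l$ and $x=x_r$, shows that the interior and boundary tempered integrals both contribute with the correct (non-negative) sign. Combined with $\lambda\geq 0$, this yields the target inequality $\int_0^\infty e^{-2s_0 t}(d/dt)\|u\|^2\,dt\leq 0$, and Lemma~\ref{lema2in} applied with $v(t)=\|u(\cdot,t)\|$ then delivers (\ref{IQ1}).

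The main obstacle I anticipate is the rigorous justification of the composition identity ${_{0}D}_t^{1-\gamma,\lambda}\!\circ\! {_{0}^{C}D}_t^{\gamma/2,\lambda}= {_{0}D}_t^{1-\gamma/2,\lambda}$ used to reduce the boundary flux to a single tempered derivative; a direct time-domain proof is awkward owing to the singular kernels and tempering factors, but the identity follows cleanly on the Laplace side from (\ref{gLpa})-(\ref{caputLpa}) once $u(x_{l,r},0)=0$ is used to kill the initial-data remainder in (\ref{caputLpa}). A secondary bookkeeping point is fixing the principal branch of $(s+\lambda)^{\alpha}$ on the right half-plane so that positivity of $\text{Re}((s_0+\lambda+i\zeta)^{\alpha})$ is uniform in $\zeta$; this is standard but must be tracked consistently for every $\alpha\in\{1-\gamma,\,1-\gamma/2\}$ that appears.
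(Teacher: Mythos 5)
Your proposal is correct and follows essentially the same route as the paper: an $L^2$ energy identity whose nonlocal terms are controlled via the Parseval relation (Lemma~\ref{lema2p}) and the positivity of $\mathrm{Re}\big((s_0+\lambda+i\zeta)^{\beta}\big)$ for $\beta\in\{1-\gamma,\,1-\gamma/2\}$ on the Bromwich line, followed by Lemma~\ref{lema2in}. The only difference is the order of operations: the paper Laplace-transforms the whole reduced problem first and performs the energy estimate on the transform side, which sidesteps your composition identity ${_{0}D}_t^{1-\gamma,\lambda}\circ{_{0}^{C}D}_t^{\gamma/2,\lambda}={_{0}D}_t^{1-\gamma/2,\lambda}$ because there the boundary flux is already the symbol product $(s+\lambda)^{1-\gamma}(s+\lambda)^{\gamma/2}=(s+\lambda)^{1-\gamma/2}$.
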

\begin{proof}
Applying the Laplace transform to \eqref{exacta1}-\eqref{exacta3}, we have
\begin{align}
 \frac{\partial \widehat{u}(x,s)}{\partial t}&=\kappa_\gamma(s+\lambda)^{1-\gamma}\widehat{u}_{xx}(x,s)-\lambda
    \widehat{u}(x,s),\quad x\in \Omega_{in},    \label{Lerrexacta1}\\
u(x,0)&=u_0(x), \quad x\in\Omega_i ,\label{Lerrexacta2}\\
\widehat{u}_x(x_l,s)&=\frac{(s+\lambda)^{\gamma/2}}{\sqrt{\kappa_\gamma}} \widehat{u}(x_l,s),\\
\widehat{u}_x(x_r,s)&=-\frac{(s+\lambda)^{\gamma/2}}{\sqrt{\kappa_\gamma}} \widehat{u}(x_r,s), \label{Lerrexacta3}
\end{align}
where $\frac{\partial \widehat{u}(x,s)}{\partial t}:=s\widehat{u}(x,s)-u_0(x)$.
Multiplying  equation \eqref{Lerrexacta1} by $\overline{\widehat{u}(x,s)}$ and integrate on $\Omega_{in}$, we obtain
\begin{equation}\label{integralB}
    \bigg(\frac{\partial\widehat{u}}{\partial t},\widehat{u}\bigg)
    =\kappa_\gamma(s+\lambda)^{1-\gamma}\big(\widehat{u}_{xx}(x,s),\widetilde{u}(x,s)\big)-\lambda
    \big(\widehat{u},\widehat{u}\big).
\end{equation}
By integrating \eqref{integralB} by parts we get
\begin{equation}\label{integralA}
   \bigg(\frac{\partial\widehat{u}}{\partial t},\widehat{u}\bigg)=-\kappa_\gamma(s+\lambda)^{1-\gamma}\|\widehat{u}_{x}\|^{2}-\kappa_\gamma(s+\lambda)^{1-\gamma}\widehat{u}_{x}(x,s)\overline{\widehat{u}(x,s)}\big|^{x_r}_{x_l}-\lambda
    \|\widetilde{u}\|^{2}.
\end{equation}
In view of the boundary conditions \eqref{Lerrexacta3}, we have
\begin{align}\label{integralC}
\bigg(\frac{\partial\widehat{u}}{\partial t},\widehat{u}\bigg)=&-\kappa_\gamma (s+\lambda)^{1-\gamma}\|\widehat{u}_{x}\|^{2}-\lambda
 \|\widetilde{u}\|^{2} \nonumber\\
 &-\sqrt{\kappa_\gamma}(s+\lambda)^{1-\frac{\gamma}{2}}\big[|\widehat{u}(x_l,s)|^2+|\widehat{u}(x_r,s)|^2\big].
\end{align}
Hence, we have
\begin{align}\label{integralC}
   Re\bigg(\bigg(\frac{\partial\widehat{u}}{\partial t},\widehat{u}\bigg)\bigg)=&-\kappa_\gamma Re\big((s+\lambda)^{1-\gamma}\big)\|\widehat{u}_{x}\|^{2}-\lambda
    Re\big(\|\widetilde{u}\|^{2}\big)\\
   &-\sqrt{\kappa_\gamma}Re\big((s+\lambda)^{1-\frac{\gamma}{2}}\big)\big[|\widehat{u}(x_l,s)|^2+|\widehat{u}(x_r,s)|^2\big].
\end{align}
Using the fact $\arg(s_0+\lambda+i\zeta)\in(-\pi/2,\pi/2]$ for $s_0+\lambda>0$, we have
\begin{align}\label{postiveker}
   (s_0+\lambda+i\zeta)^{\beta}&=\big(|(s_0+\lambda+i\zeta)|e^{i\arg(s_0+\lambda+i\zeta)}\big)^{\beta}\\
   &=\big((s_0+\lambda)^2+(\zeta)^2\big)^{\frac{\beta}{2}}e^{\beta i\arg((s_0+\lambda+i\zeta))}.
\end{align}
which means $Re((s+\lambda)^{\beta})=|(s+\lambda)|^{\beta}\cos(\beta \arg((s+\lambda)))>0,$ for $ \beta=1-\gamma\textrm{or}~1-\frac{\gamma}{2},\gamma\in(0,1)$.
Above inequality implies $Re\big((s+\lambda)^{1-\gamma}\big)>0$ and $~Re\big((s+\lambda)^{1-\frac{\gamma}{2}}\big)>0,$ and hence
$
Re\bigg(\big(\frac{\partial\widehat{u}}{\partial t},\widehat{u}\big)\bigg)\leq0.
$
Paseval's relation \eqref{prelationp} then leads
\begin{align}\label{integralDh}
    \int_{-\infty}^{+\infty}\bigg(\frac{\partial\widehat{u}}{\partial t},\widehat{u}\bigg)(s_0+i\zeta)d\zeta
    =&\int_{\Omega_{i}}\int_{-\infty}^{+\infty}\frac{\partial\widehat{u}}{\partial t}(s_0+i\zeta)\overline{\widehat{u}}(s_0+i\zeta)d\zeta dx \nonumber\\
    =&2\pi\int_{\Omega_{i}}\int_{0}^{+\infty}e^{-2s_0t}\frac{\partial u}{\partial t}u(t)dtdx.
\end{align}
Furthermore, using Lemma \ref{lema2in}, we have
\begin{equation}\label{Hstability}
\int_{\Omega_{i}}\int_{0}^{+\infty}e^{-2s_0t}\frac{\partial u}{\partial t}u(x,t)dtdx\leq \frac{1}{2s_0}\int_{\Omega_{i}}u^2_0(x)dx,
\end{equation}
which is the desired inequality \eqref{IQ1}.
\end{proof}
The model \eqref{orgproblem} can be rewritten  as
\begin{equation}\label{orgproblema}
\frac{\partial e^{\lambda t}u(x,t)}{\partial t}=\kappa_\gamma ~_{0}D_t^{1-\gamma}\big(e^{\lambda t}u_{xx}\big).
\end{equation}
Performing Riemann-Liouville fractional integral operator $~_{0}D_t^{\gamma-1}$ on both side of \eqref{orgproblema},
 using the composite properties  fractional derivative and integral \cite{Podlubny:99}, we arrive at
\begin{equation}\label{orgproblemb}
{_{0}^{C}D}_t^{\gamma}\big(e^{\lambda t}u(x,t)\big)=\kappa_\gamma e^{\lambda t}u_{xx},
\end{equation}
or, equivalently,
\begin{equation}\label{orgproblemc}
{_{0}^{C}D}_t^{\gamma,\lambda}(u(x,t))=\kappa_\gamma u_{xx},
\end{equation}
Hence, the reduced problem  \eqref{exacta1}-\eqref{exacta3} equivalents to  the following initial-boundary value problem
\begin{align}
&{_{0}^{C}D}_t^{\gamma}\big(e^{\lambda t}u(x,t)\big)=\kappa_\gamma \big(e^{\lambda t}u(x,t)\big)_{xx},   &x\in \Omega_{i}, \;t>0,   \label{exacta1eq}\\
&u(x,0)=u_0(x), & x\in\Omega_i ,\label{exacta2eq}\\
& \big(e^{\lambda t}u(x,t)\big)_{x}=\frac{1}{\sqrt{\kappa_\gamma}}~{_{0}^{C}D}_{t}^{\frac{\gamma}{2}}\big(e^{\lambda t}u(x,t)\big), & x=x_l,\label{exacta3eqa}\\
& \big(e^{\lambda t}u(x,t)\big)_{x}=-\frac{1}{\sqrt{\kappa_\gamma}}~{_{0}^{C}D}_{t}^{\frac{\gamma}{2}}\big(e^{\lambda t}u(x,t)\big), &  x=x_r.\label{exacta3eqb}
\end{align}
For the initial-boundary value problem \eqref{exacta1eq}-\eqref{exacta3eqb}, we have the long-time stability.
\begin{theorem}\label{stabilityLABCL2}The solution $u(x, t)$ of the initial-boundary value  problem \eqref{exacta1eq}-\eqref{exacta3eqb} holds the prior estimate
\begin{align} \label{IQ1thm}
 \|u(x,t)\|^2+2\kappa_\gamma~{_{0}D}_t^{-\gamma,2\lambda}\|u_x(x,t)\|^{2}
 \leq e^{-2\lambda t}\|u_0(x)\|^{2}.
\end{align}where ${_{0}D}_t^{-\gamma,2\lambda}$ denotes  the Riemann-Liouville integral operator given in \eqref{integralL}.
\end{theorem}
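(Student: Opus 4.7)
The plan is to work with the equivalent formulation \eqref{exacta1eq}--\eqref{exacta3eqb} in terms of $v(x,t) := e^{\lambda t}u(x,t)$, which carries only standard (non-tempered) Caputo derivatives and is therefore amenable to a direct energy argument. First I multiply \eqref{exacta1eq} by $v$ and integrate over $\Omega_i$; integration by parts on the $v_{xx}$ term produces $-\kappa_\gamma\|v_x\|^2$ plus boundary contributions $\kappa_\gamma v v_x\big|_{x_l}^{x_r}$. Substituting the ABCs \eqref{exacta3eqa}--\eqref{exacta3eqb} rewrites these boundary contributions as
\begin{equation*}
-\sqrt{\kappa_\gamma}\Bigl[v(x_l,t)\cdot {_{0}^{C}D}_{t}^{\gamma/2}v(x_l,t)+v(x_r,t)\cdot {_{0}^{C}D}_{t}^{\gamma/2}v(x_r,t)\Bigr].
\end{equation*}

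Next I invoke the Alikhanov-type inequality $w\cdot {_{0}^{C}D}_t^{\alpha}w\geq \tfrac12 {_{0}^{C}D}_t^{\alpha}(w^2)$ (valid for $0<\alpha<1$ and absolutely continuous $w$) applied once to the bulk time-derivative term with $\alpha=\gamma$ and once, pointwise in $x=x_l,x_r$, to the two boundary terms with $\alpha=\gamma/2$. This yields the pointwise-in-$t$ differential inequality
\begin{equation*}
\tfrac{1}{2}{_{0}^{C}D}_{t}^{\gamma}\|v\|^{2}+\kappa_\gamma\|v_x\|^{2}+\tfrac{\sqrt{\kappa_\gamma}}{2}\Bigl[{_{0}^{C}D}_{t}^{\gamma/2}v^{2}(x_l,t)+{_{0}^{C}D}_{t}^{\gamma/2}v^{2}(x_r,t)\Bigr]\leq 0.
\end{equation*}

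Then I apply the Riemann--Liouville integral operator ${_{0}D}_t^{-\gamma}$ to both sides and use the composition rules ${_{0}D}_t^{-\gamma}{_{0}^{C}D}_t^{\gamma}f=f-f(0)$ and ${_{0}D}_t^{-\gamma}{_{0}^{C}D}_t^{\gamma/2}f={_{0}D}_t^{-\gamma/2}(f-f(0))$. The two boundary terms become non-negative since they equal $\tfrac{\sqrt{\kappa_\gamma}}{2}{_{0}D}_{t}^{-\gamma/2}(v^2(x_l,\cdot)-u_0^2(x_l))$ and likewise at $x_r$; assuming (as is implicit in the setup) that $u_0$ vanishes at the artificial boundaries, they are manifestly $\geq 0$ and may be dropped, giving
\begin{equation*}
\tfrac{1}{2}\|v(\cdot,t)\|^{2}+\kappa_\gamma\,{_{0}D}_t^{-\gamma}\|v_x(\cdot,t)\|^{2}\leq \tfrac{1}{2}\|u_0\|^{2}.
\end{equation*}
Finally I translate back to $u$: since $\|v\|^2=e^{2\lambda t}\|u\|^2$ and $e^{-2\lambda t}{_{0}D}_t^{-\gamma}(e^{2\lambda\cdot}\|u_x\|^{2})={_{0}D}_t^{-\gamma,2\lambda}\|u_x\|^{2}$ directly from \eqref{integralL}, multiplying through by $2e^{-2\lambda t}$ produces exactly \eqref{IQ1thm}.

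The main obstacle is the rigorous justification of the fractional chain-rule inequality at the boundary points and the commutation of ${_{0}D}_t^{-\gamma}$ with the pointwise evaluations, since these require enough temporal regularity of $v(x_l,\cdot)$ and $v(x_r,\cdot)$ and a careful bookkeeping of the initial traces; once those are granted, the composition identities and the positivity of the Riemann--Liouville kernel make the remaining steps routine.
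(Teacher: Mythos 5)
Your proposal follows essentially the same route as the paper: the substitution $v=e^{\lambda t}u$, the energy identity obtained by multiplying \eqref{exacta1eq} by $v$ and integrating by parts, the substitution of the ABCs into the boundary terms, the Alikhanov inequality $w\,{_{0}^{C}D}_t^{\alpha}w\geq\tfrac12\,{_{0}^{C}D}_t^{\alpha}(w^2)$ applied to both the bulk and boundary contributions, the application of ${_{0}D}_t^{-\gamma}$ with the composition rule, and the final rescaling by $e^{-2\lambda t}$.

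The one place where you genuinely diverge is the disposal of the boundary terms, and there your version is the more defensible one. The paper drops them immediately by asserting ${_{0}^{C}D}_{t}^{\gamma/2}\bigl[v^2(x_l,t)+v^2(x_r,t)\bigr]\geq0$, i.e.\ that the Caputo derivative of a non-negative function is pointwise non-negative; this is false in general (any positive decreasing trace gives a negative Caputo derivative). You instead keep the terms, apply ${_{0}D}_t^{-\gamma}$ first, and use ${_{0}D}_t^{-\gamma}{_{0}^{C}D}_t^{\gamma/2}f={_{0}D}_t^{-\gamma/2}(f-f(0))$ together with the positivity of the Riemann--Liouville kernel and the vanishing of $u_0$ at $x_l,x_r$ (guaranteed by $\mathrm{supp}\,u_0\subset\Omega_i$) to conclude the integrated boundary terms are non-negative and may be discarded. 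This repairs the only shaky step in the paper's argument while arriving at the same estimate; your closing caveat about the temporal regularity of the boundary traces needed for the Alikhanov inequality and the composition identities is the right thing to flag, and is no worse than what the paper implicitly assumes.
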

\begin{proof}
Taking $v(x,t)=e^{\lambda t}u(x,t)$, form \eqref{exacta1eq}-\eqref{exacta3eqb}, we have
\begin{align}
&{_{0}^{C}D}_t^{\gamma}v(x,t)=\kappa_\gamma v_{xx}(x,t),   &x\in \Omega_{i}, \;t>0,   \label{exacta1eqiq}\\
&u(x,0)=u_0(x), & x\in\Omega_i ,\label{exacta2eqiq}\\
&v_x(x,t)=\frac{1}{\sqrt{\kappa_\gamma}}~{_{0}^{C}D}_{t}^{\frac{\gamma}{2}}v(x,t), & x=x_l,\label{exacta3eqiqa}\\
&v_x(x,t)=-\frac{1}{\sqrt{\kappa_\gamma}}~{_{0}^{C}D}_{t}^{\frac{\gamma}{2}}v(x,t), &  x=x_r.\label{exacta3eqiqb}
\end{align}
We multiply equation \eqref{exacta1eqiq} by $v(x,t)$ and integrate on $\Omega_{in}$, we get
\begin{equation}\label{integralAiq}
   \big({_{0}^{C}D}_t^{\gamma}v,v\big)=-\kappa_\gamma\|v_x\|^{2}+\kappa_\gamma v_x(x,t) v(x,t)\big|^{x_r}_{x_l}.
\end{equation}
In view of the boundary conditions \eqref{exacta3eqiqa}-\eqref{exacta3eqiqb}, we have
\begin{align}\label{integralCiq}
\big({_{0}^{C}D}_t^{\gamma}v,v\big)=-\kappa_\gamma\|v_x\|^{2}-\sqrt{\kappa_\gamma}~\big[v(x_r,t){_{0}^{C}D}_{t}^{\frac{\gamma}{2}}v(x_r,t) +v(x_l,t){_{0}^{C}D}_{t}^{\frac{\gamma}{2}}v(x_l,t) \big].
\end{align}
With help of the inequality \cite{Alikhanov:01}
\begin{equation}\label{Caputoinequality}
w(t){_{0}^{C}D}_t^{\alpha}w(t)\geq\frac{1}{2}{_{0}^{C}D}_t^{\alpha}(w^2(t)),0<\alpha<1,
\end{equation}
and the fact $${_{0}^{C}D}_{t}^{\frac{\gamma}{2}}~\big[v^2(x_r,t)+v^2(x_l,t)\big]\geq0,$$
we get
\begin{align}\label{fiintegralCiq}
{_{0}^{C}D}_t^{\gamma}\|v\|^2+2\kappa_\gamma\|v_x\|^{2}\leq 0.
\end{align}

By applying the fractional integral  operator ${_{0}D}_t^{-\gamma}$ to both sides of inequality \eqref{fiintegralCiq},
using the fact \cite{Li:07}
$${_{0}D}_t^{-\gamma}~{_{0}D}_t^{\gamma}(w(t))=w(t)-w(0),$$
 we
obtain the estimate
\begin{align}\label{finalest}
\|v\|^2+2\kappa_\gamma~{_{0}^{C}D}_t^{-\gamma}\|v_x\|^{2}\leq \|v_0(x)\|^2.
\end{align}
Taking $u(x,t)=e^{-\lambda t}v(x,t)$ we get  the estimate \eqref{IQ1thm}.
\end{proof}
\section*{Acknowledgements}
 The author would like to thank Dr. J.W.Zhang for his very helpful comments and suggestions.


\end{document}